\theoremstyle{definition}
\newtheorem{thm}{Theorem}[section]
\newtheorem{lem}{Lemma}[section]
\newtheorem{prop}{Proposition}[section]
\newtheorem{conj}{Conjecture}[section]
\newcommand{\myfootnote}[1]{
    \renewcommand{\thefootnote}{}
    \footnotetext{\hspace{-2pt}\scriptsize#1}
    \renewcommand{\thefootnote}{\arabic{footnote}}
}
\newcommand{\field}[1]{\mathbb{#1}}
\newcommand{\C}{\field{C}}
\newcommand{\B}{\field{B}}
\DeclareMathOperator{\id}{Id}
\DeclareMathOperator{\dist}{dist}
\numberwithin{equation}{section}
\begin{document}
\title[]{Proper holomorphic embeddings of complements of large Cantor sets in $\mathbb C^2$.}
\author[Di Salvo]{G. D. Di Salvo}
\author[Wold]{E. F. Wold}
\address{E. F. Wold: Department of Mathematics\\
University of Oslo\\
Postboks 1053 Blindern, NO--0316 Oslo, Norway}\email{erlendfw@math.uio.no}

\address{G. D. Di Salvo: Department of Mathematics\\
University of Oslo\\
Postboks 1053 Blindern, NO--0316 Oslo, Norway}\email{giovandd@math.uio.no}

%
%
\subjclass[2010]{32E20}
\date{\today}
\keywords{}
\myfootnote
{
    Published in \emph{Arkiv f\"or Matematik},
    October~2022,
    volume~60,
    issue~2,
    pp.~323--333.
}
\myfootnote
{
    \doi{10.4310/ARKIV.2022.v60.n2.a5}.
}
\begin{abstract}
We presents a construction of a proper holomorphic embedding $f\colon\Bbb P^1\setminus C\hookrightarrow\C^2$, where $C$ is a Cantor set obtained by removing smaller and smaller vertical and horizontal strips from a square of side 2, allowing to realize it to have Lebesgue measure arbitrarily close to 4. 
\end{abstract}

\maketitle
\section{Introduction}

\subsection{The main result}
A major unresolved issue, known as Forster's Conjecture, is whether or not every open Riemann surface $X$ admits a holomorphic embedding into $\mathbb C^2$, and, if it does, whether it admits a \emph{proper} holomorphic embedding. For instance, if $Y$ is a compact Riemann surface, and if $X=Y\setminus C$ where $C$ is a closed set whose connected components are all points, it is unknown whether $X$ embeds (properly or not) into $\mathbb C^2$. We may consider two extremal cases: (i) the case where $C$ is a finite set, and (ii) the case where $C$ is a Cantor set, and we may further consider the simplest compact Riemann surface in this context, namely $Y=\mathbb P^1$. Then in case (i), it is clear that $X$ admits a proper holomorphic embedding into $\mathbb C^2$, so we will consider the case (ii). 

Let $Q$ denote the square $Q=[-1,1]\times [-1,1]\subset\mathbb C$, and let $\mu$ denote the 2--dimensional 
Lebesgue measure on $\mathbb C$. A procedure for constructing a (large) Cantor set $C\subset Q$ is 
as follows (see Section \ref{firstpaper:sec:ind} for a more detailed description). Let $l_1$ denote the vertical 
line dividing $Q$ into two equal pieces, choose $\delta_1>0$ small, and remove an open $\delta_1$--neighbourhood 
of $l_1$ to obtain a union $Q_2$ of two disjoint rectangles. Next, let $l_2^{j}, j=1,2$, be horizontal lines dividing 
each rectangle in $Q_2$ into equal pieces, choose $\delta_2$ small, and remove an open $\delta_2$--neighbourhood 
of $l_2^1\cup l_2^2$ to obtain a disjoint union $Q_3$ of four rectangles. Next, switch back to vertical lines, chose 
$\delta_3$ small to obtain $Q_4$ and so forth, to obtain a sequence $\delta_j\rightarrow 0$ and nested sequence $Q_j$ of rectangles 
such that $C=\cap_j Q_j$ is a Cantor set contained in $Q$. Our main result is the following.

\begin{thm}\label{firstpaper:main}
There are sequences $\{\delta_j\}_j$ converging to zero arbitrarily fast such that the complement $\Bbb P^1\setminus C$ of the resulting Cantor set admits a proper holomorphic embedding into $\Bbb C^2$.
In particular, for any $\epsilon>0$ we may achieve that $\mu(C)>4-\epsilon$.  
\end{thm}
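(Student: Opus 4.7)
I would construct the proper embedding $f$ as the limit of an inductively-built sequence of holomorphic embeddings $f_n\colon\Omega_n\to\C^2$, where $\Omega_n$ is an open neighborhood of a compact set $K_n:=\Bbb P^1\setminus U_n$, with $U_n$ an open neighborhood of $Q_n$ satisfying $U_{n+1}\subset U_n$ and $\bigcap_n U_n=C$, so that $K_n\nearrow\Bbb P^1\setminus C$. The two properties to arrange inductively are: (i) $f_{n+1}$ is uniformly close to $f_n$ on $K_n$, so that $(f_n)$ is Cauchy on every compact subset of $\Bbb P^1\setminus C=\bigcup_n K_n$ and admits a holomorphic limit $f$; and (ii) $f_{n+1}$ sends every point of $K_{n+1}\setminus K_n$ outside a ball of radius $R_n$ in $\C^2$, with $R_n\to\infty$, ensuring that $f$ is proper. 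The base case $f_1$ is a proper holomorphic embedding of a neighborhood of $\Bbb P^1\setminus Q$, obtained from a known proper embedding of the disc (since $\Bbb P^1\setminus Q$ is simply connected).

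\textbf{Inductive step.} At stage $n+1$, each of the $2^{n-1}$ rectangles $R\subset Q_n$ is split into two daughters $R_1,R_2$ by the removal of a strip of width $2\delta_n$. The corresponding topological change is that the single boundary-at-infinity ``tube'' of $f_n(\Omega_n)$ attached to $\partial R$ must be replaced by two disjoint tubes, one for each $\partial R_i$. Following the standard Andersén--Lempert template, I would first construct a holomorphic map $g_n$, defined on a neighborhood of $\Omega_{n+1}$, that approximates $f_n$ on $K_n$ and realizes this tube-splitting; then one corrects $g_n$ by an ambient holomorphic automorphism $\Phi_n\in\Aut(\C^2)$ supplied by the Andersén--Lempert theorem applied to a polynomially convex compact set containing the relevant image data, so as to push the two new tubes outside $B(0,R_n)$. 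The next embedding $f_{n+1}$ is the resulting modified map on $\Omega_{n+1}$.

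\textbf{Main obstacle.} The chief difficulty is that $\delta_n$ is allowed to decay arbitrarily fast, so that the strips become extremely thin and $\mu(C)$ can approach $4$. Two issues must be controlled: (i) preserving polynomial convexity of the compact set used at each inductive stage, as required for the Andersén--Lempert step, despite the degenerating strip geometry; and (ii) ensuring that the automorphism $\Phi_n$, and hence $f_{n+1}$, remains injective across the thin strips. The resolution exploits the fact that $\delta_n$ is chosen \emph{adaptively}: after $f_n$, $g_n$, and $\Phi_n$ have been designed, one selects $\delta_n$ small enough that the corresponding strip fits inside a pre-allocated geometric ``gap'' in $\C^2$ whose size is determined entirely by $f_n$ and independent of $\delta_n$. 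This stage-by-stage freedom is precisely what permits $\{\delta_n\}$ to decay arbitrarily fast and therefore allows $\mu(C)$ to be made arbitrarily close to $4$.
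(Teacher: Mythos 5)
Your overall scheme --- an inductive sequence of embeddings $f_n$ defined near $\mathbb P^1\setminus Q_n$, Cauchy on an exhaustion $K_n$ of $\mathbb P^1\setminus C$ and sending $\overline{C_{n+1}\setminus K_n}$ outside balls of radius $r_n\to\infty$, with $\delta_{n+1}$ chosen adaptively at the end of each step --- is exactly the architecture of the paper's proof. However, there is a genuine gap at the decisive point of the induction step, namely where you assert that an automorphism $\Phi_n$ ``supplied by the Andersén--Lempert theorem'' pushes the two new boundary tubes outside $B(0,R_n)$. At that stage the new boundary consists of \emph{compact closed curves} bounding the compact pieces of the image surface, and no Andersén--Lempert/Forstneri\v c--Rosay moving lemma applies to them directly: to move a set $\Lambda$ far away by an automorphism close to the identity on a compact $L$ one needs (an isotopy keeping) $L\cup\Lambda$ polynomially convex, whereas here the polynomial hull of the closed boundary curves contains the bounded chunk of the complex curve they bound, which meets the data you must keep fixed. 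This obstruction is precisely what the paper's two key tools are designed to remove: one first \emph{exposes} a boundary point on each new boundary component (Theorem \ref{firstpaper:exposethm}), then applies the rational shear $g_{n+1}(z,w)=\bigl(z,w+\sum_j \alpha_j/(\pi_1(p_j)-z)\bigr)$ with poles exactly at the exposed points; this converts each closed boundary curve into an unbounded arc with $|\pi_2|\to\infty$ enjoying the projection property of Lemma \ref{firstpaper:rmcurves}, and only for such unbounded curves does one obtain the automorphism $\phi_{n+1}$, close to the identity on $(r_n+\eta)\overline{\B^2}\cup K'$, that expels the boundary from $r_{n+1}\overline{\B^2}$. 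Without the exposing-plus-shear step your induction cannot close.

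A secondary remark: what you single out as the ``main obstacle'' (degenerating strip geometry as $\delta_n\to 0$) is essentially harmless in the paper's argument. The induction step is carried out with a preliminary width $\tilde\delta_{n+1}$, itself already as small as one likes; the extension across the cut is done by Mergelyan approximation on $C_n\cup l_n$ (the surface together with the full dividing segments), and the final $\delta_{n+1}<\tilde\delta_{n+1}$ is chosen only afterwards, so no polynomial-convexity or injectivity issue ``across the thin strips'' ever arises in $\C^2$. Your instinct that the adaptive, end-of-step choice of $\delta_{n+1}$ is what permits arbitrarily fast decay (hence $\mu(C)>4-\epsilon$) is correct and matches the paper, but the real work of the proof lies in the step your proposal leaves unspecified.
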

The motivation for proving this result is that there have been speculations that considering complements of 
"fat" Cantor sets could lead to counterexamples to Forster's Conjecture.   

We note that Orevkov \cite{firstpaper:Orevkov2008} showed the existence of a Cantor set $C\subset\mathbb P^1$
such that $\mathbb P^1\setminus C$ admits a proper holomorphic embedding into $\mathbb C^2$. His
construction is quite cryptical and it is explained in detail in \cite{firstpaper:DS22}, where it is also proved that $C$ can be obtained to have zero Hausdorff dimension. From such a construction it seems difficult, or perhaps impossible, to achieve that $C$ is large.

\subsection{History}
Dealing with Stein manifolds, one of the most important goals to achieve is to embed them properly holomorphically into $\Bbb C^N$ for some $N$.
A first important result comes from Remmert \cite{firstpaper:R56}, who proved in 1956 that every $n$--dimensional Stein manifold admits a proper holomorphic embedding into $\C^N$ for $N$ big enough. Such a result was made precise by Bishop and Narasimhan who independently proved in 1960--61 that $N$ can be taken to be $2n+1$ (see \cite{firstpaper:B61} and \cite{firstpaper:N60}).
In 1970 Forster \cite{firstpaper:F70} improved Bishop--Narasimhan's result, decreasing $N$ to $\lfloor\frac{5n}{3}\rfloor+2$ and proving that it is not possible for $N$ to go below $\lfloor\frac{3n}{2}\rfloor+1$ and conjecturing that the euclidean dimension could have been improved exactly to  $\lfloor\frac{3n}{2}\rfloor+1$.
Eliashberg, Gromov (1992) and Sh\"urmann (1997) proved the following
\begin{thm}{}[Eliashberg--Gromov \cite{firstpaper:EG92} (1992) and Sh\"urmann \cite{firstpaper:Sh97} (1997)] Every $n$--dimensional Stein manifold $X$, with $n\ge2$ embeds properly holomorphically into $\C^N$ with $N=\lfloor\frac{3n}{2}\rfloor+1$.
\end{thm}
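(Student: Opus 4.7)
The plan is to use the classical dimension-reduction strategy that underlies the proofs of Forster, Eliashberg-Gromov and Sch\"urmann: start from a proper holomorphic embedding of $X$ into a high-dimensional Euclidean space and push the target dimension down as far as possible, alternating generic complex-linear projections with carefully chosen holomorphic perturbations that repair the singularities introduced by those projections, all while preserving properness.

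Concretely, apply the Bishop-Narasimhan theorem to obtain a proper holomorphic embedding $F\colon X\hookrightarrow\C^{2n+1}$. For a generic complex-linear projection $\pi\colon\C^M\to\C^{M-1}$ with $M-1\ge 2n$, the composition $\pi\circ F$ is automatically an immersion (the non-immersive locus has expected complex codimension $M-n\ge n+1$ in $X$, hence is empty for generic $\pi$) and its self-intersections are isolated transverse double points, which can be eliminated by a small holomorphic perturbation supported near them. This descent handles dimensions down to $N=2n$. Below $2n$, the projection acquires a nonempty branch locus of expected complex dimension $2n-N-1$ in $X$ and a self-intersection locus of expected complex dimension $2n-N$ in $X\times X\setminus\Delta$, and genuine nonlinear holomorphic perturbations are needed to resolve them.

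The core technical result is an inductive descent lemma: given a proper holomorphic map $g\colon X\to\C^N$ whose branch and self-intersection loci are complex-analytic subsets of the expected dimension, one may perturb $g$ to a proper holomorphic embedding provided $N\ge\lfloor 3n/2\rfloor+1$. The mechanism is the Oka-Grauert-Gromov principle for maps of Stein manifolds into $\C^N$. Locally near each branch point one chooses an additive holomorphic correction that restores injectivity of the differential, and near each self-intersection point one chooses a correction that separates the two sheets; these local data are then patched to a global holomorphic perturbation via a Cartan-style splitting on a strictly plurisubharmonic exhaustion of $X$, using the Oka property of $\C^N$. The bound $N\ge\lfloor 3n/2\rfloor+1$ is precisely the dimension threshold that makes the transversality count work: the singular loci have sufficiently large complex codimension that a generic infinitesimal holomorphic perturbation pulls them off.

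The main difficulty is not the local step but the global, properness-preserving part of the construction. One runs a Mittag-Leffler-style induction along a normal exhaustion $X=\bigcup_k K_k$ by sublevel sets of a strictly plurisubharmonic exhaustion function, producing at each stage a correction $h_k$ such that (a) $g_k=g_{k-1}+h_k$ has no singularities inside $K_k$, (b) the norm of $h_k$ on $K_{k-1}$ is small enough for $\{g_k\}$ to converge uniformly on compacts to a holomorphic map, and (c) the tail of $h_k$ outside $K_k$ is controlled so that the limit remains proper. Balancing these three demands at every stage, while continuing to satisfy the dimension hypothesis, is the crux of the Eliashberg-Gromov and Sch\"urmann arguments. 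The restriction $n\ge 2$ is exactly where the dimension count remains favourable; for $n=1$ the self-intersection locus has the same complex dimension as $X$ itself and the scheme collapses, which is why Forster's conjecture for Riemann surfaces---the very setting motivating the present paper---remains open.
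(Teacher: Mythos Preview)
The paper does not prove this theorem. It is quoted as background from the literature (with attributions to \cite{firstpaper:EG92} and \cite{firstpaper:Sh97}) in order to motivate Forster's conjecture in the case $n=1$; no argument, sketch, or outline is given, and the rest of the paper makes no use of the theorem beyond that motivational role. So there is nothing in the paper to compare your attempt against.

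That said, a brief comment on your sketch on its own terms. The overall architecture you describe---start from Bishop--Narasimhan in $\C^{2n+1}$, project generically, and use Oka-theoretic perturbations to kill the resulting singular and double-point loci down to the threshold $N=\lfloor 3n/2\rfloor+1$---is indeed the shape of the Eliashberg--Gromov and Sch\"urmann proofs. But as written it is an outline, not a proof: the decisive step is the construction of global holomorphic perturbations that simultaneously remove the singular locus, preserve properness, and converge, and you have only named this step rather than carried it out. In particular, your dimension count for the self-intersection locus (``expected complex dimension $2n-N$ in $X\times X\setminus\Delta$'') and the assertion that ``a generic infinitesimal holomorphic perturbation pulls them off'' hide exactly the hard analysis; the threshold $\lfloor 3n/2\rfloor+1$ does not fall out of a na\"ive transversality count alone but requires the h-principle machinery. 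If you intend this as a pointer to the original papers, it is adequate; if you intend it as a self-contained proof, the core is missing.
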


The proof of the theorem breaks down when $n=1$; since $1$--dimensional Stein manifolds are precisely open connected Riemann surfaces, Forster's conjecture reduces to the following 
\begin{conj}
Every open connected Riemann surface embeds properly holomorphically into~$\C^2$.
\end{conj}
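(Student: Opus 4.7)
I would build the embedding $f\colon\mathbb{P}^1\setminus C\hookrightarrow\mathbb{C}^2$ as a limit of holomorphic embeddings $f_j\colon\Omega_j\hookrightarrow\mathbb{C}^2$, where $\Omega_j$ is an open neighborhood of $\mathbb{P}^1\setminus Q_j$. Since the $Q_j$ decrease to $C$, the $\Omega_j$ form an exhaustion of $\mathbb{P}^1\setminus C$, and the theorem is reduced to producing a Cauchy sequence $f_j$ whose limit is both injective and proper. The starting point $\Omega_1=\mathbb{P}^1\setminus Q_1$ is simply connected and embeds trivially. The induction is to be driven by Andersén--Lempert theory: the passage from $f_j$ to $f_{j+1}$ is effected by composing with a holomorphic automorphism $\Phi_{j+1}$ of $\mathbb{C}^2$ chosen to redistribute the image so that newly revealed regions are thrown far away while the image of a fixed compact exhaustion is barely perturbed.

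\textbf{The inductive step.} Going from $Q_j$ to $Q_{j+1}$ deletes, from each of the $2^{j-1}$ rectangles of $Q_j$, an open strip of width $\delta_{j+1}$, so $\Omega_{j+1}$ gains $2^{j-1}$ thin simply connected strip-regions compared to $\Omega_j$. First I would extend $f_j$ holomorphically across neighborhoods of these new strips to obtain an embedding $\tilde f_j$ of a neighborhood of $\mathbb{P}^1\setminus Q_{j+1}$; the topological simplicity of the strips gives ample freedom for attaching ends. Next, via Andersén--Lempert approximation I would produce $\Phi_{j+1}\in\Aut(\mathbb{C}^2)$ satisfying two properties: (a) $\|\Phi_{j+1}-\id\|<\epsilon_j$ on a compact $K_j\subset f_j(\Omega_j)$ chosen so that $\bigcup_j f_j^{-1}(K_j)=\mathbb{P}^1\setminus C$, and (b) $\Phi_{j+1}$ sends the image $\tilde f_j(\text{new strips})$ outside the ball of radius $R_{j+1}$, where $R_j\nearrow\infty$. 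Setting $f_{j+1}:=\Phi_{j+1}\circ\tilde f_j$ closes the induction.

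\textbf{Convergence, properness, and the measure bound.} With $\epsilon_j$ summable, $\{f_j\}$ converges uniformly on compact subsets of $\mathbb{P}^1\setminus C$ to a holomorphic map $f$; Cauchy estimates on each $f_j^{-1}(K_j)$ together with Hurwitz give that $f$ is a holomorphic embedding. Properness comes from (b): at stage $j$, every point of $\Omega_j\setminus f_j^{-1}(K_j)$ acquires image of norm at least $R_{j+1}-O(\epsilon_{j+1})$, and later perturbations keep this piece far from the origin, so the preimage of any ball of $\mathbb{C}^2$ is compact in $\mathbb{P}^1\setminus C$. The measure statement is independent and easy: the total area removed is bounded by $\sum_j 2^{j-1}\delta_j L_j$ with $L_j$ a uniformly bounded total strip length, so choosing the $\delta_j$ to decay sufficiently fast---a freedom built into the inductive construction---makes this sum smaller than $\epsilon$ and yields $\mu(C)>4-\epsilon$.

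\textbf{Main obstacle.} The heart of the argument is producing $\Phi_{j+1}$ with properties (a) and (b) simultaneously. As $\delta_{j+1}$ shrinks, the newly introduced strips nestle extremely close to the previously embedded part of $\Omega_j$, so pushing their image past radius $R_{j+1}$ while leaving $K_j$ essentially fixed requires the two image pieces to be polynomially convex, or at least polynomially separated, in $\mathbb{C}^2$. The technical core is therefore designing $\tilde f_j$---for example by attaching the strip ends along prescribed totally real arcs that can be verified to be polynomially convex together with $f_j(K_j)$---and then invoking a Carleman- or Andersén--Lempert-type theorem to obtain $\Phi_{j+1}$ with the sharp control required. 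The license to take $\delta_j$ arbitrarily small is exactly what makes this tractable: very thin strips can be embedded as small perturbations of the prescribed totally real model, so the polynomial convexity survives at each stage and the induction closes.
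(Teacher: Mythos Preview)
The statement you were asked to prove is Forster's Conjecture: \emph{every} open connected Riemann surface embeds properly holomorphically into $\mathbb{C}^2$. This is an open problem; the paper does not prove it and neither does your proposal. Your sketch treats only the very special surface $\mathbb{P}^1\setminus C$ for a particular Cantor set $C$---that is the content of the paper's Theorem~\ref{firstpaper:main}, not of the Conjecture. So as a proof of the stated Conjecture the proposal is simply off target: nothing in it addresses an arbitrary open Riemann surface.

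Even read as an attempt at Theorem~\ref{firstpaper:main}, your sketch diverges from the paper at the crucial technical point and leaves a real gap there. The paper does not try to push the new strip images away by a bare Anders\'en--Lempert move; instead, at each step it (i) chooses one point $a_j$ on each new boundary component, (ii) applies the exposing theorem (Theorem~\ref{firstpaper:exposethm}) to make those points $\pi_1$--exposed, (iii) composes with a rational shear $g_{n+1}(z,w)=(z,w+\sum_j \alpha_j/(\pi_1(p_j)-z))$ so that the $2^n$ boundary curves become \emph{unbounded} curves satisfying the immediate projection property, and only then (iv) invokes Lemma~\ref{firstpaper:rmcurves} to obtain $\phi_{n+1}\in\Aut\mathbb{C}^2$ pushing the entire boundary outside $r_{n+1}\overline{\mathbb{B}^2}$ while barely moving a polynomially convex compact containing the image of $K_n$. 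The shear is exactly what lets one separate the boundary from the interior compact: before the shear, the boundary is a finite union of compact closed curves sitting right next to the rest of the embedded surface, and no polynomial-convexity or Anders\'en--Lempert argument will move those curves off to infinity while fixing nearby surface points---automorphisms are continuous. Your ``Main obstacle'' paragraph identifies the difficulty correctly, but the proposed fix (attach strips along totally real arcs and hope for polynomial convexity with $K_j$) does not supply the mechanism that actually makes the induction close; the exposed-point/shear step is the missing idea.
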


So far, only a few open Riemann surfaces are known to admit a proper holomorphic embedding into $\C^2$. The first known examples are the open unit disk in $\C$ (Kasahara--Nishino, 1970, \cite{firstpaper:S70}), open annuli in $\C$ (Laufer, 1973, \cite{firstpaper:L73}) and the punctured disk in $\C$ (Alexander, 1977, \cite{firstpaper:A77}).
Later (1995) Stens\o nes and Globevnik proved in \cite{firstpaper:GS95} that every finitely connected planar domain without isolated boundary points verifies the conjecture.
In 2009, Wold and Forstneri\v c proved the best result known so far: if $\overline D$ is a Riemann surface with smooth enough boundary which admits a smooth embedding into $\C^2$, holomorphic on the interior $D$, then $D$ admits a proper holomorphic embedding into $\C^2$ (see \cite{firstpaper:ForstnericWold2009} and next section).
Other remarkable results include proper holomorphic embeddings of certain Riemann surfaces into $\C^2$ with interpolation (see \cite{firstpaper:KLW09}), deformation of continuous mappings $f\colon S\to X$ between Stein manifolds into proper holomorphic embeddings under certain hypothesis on the dimension of the spaces (see \cite{firstpaper:AFRW16}), embeddings of infinitely connected planar domains into $\C^2$ (see \cite{firstpaper:ForstnericWold2013}), the existence of a homotopy of continuous mappings $f\colon D\to\C\times\C^*$ into proper holomorphic embedding whenever $D$ is a finitely connected planar domain without punctures (see \cite{firstpaper:R13}), existence of proper holomorphic embeddings of the unit disc $\B$ into connected pseudoconvex Runge domains $\Omega\subset\C^n$ (when $n\ge2$) whose image contains arbitrarily fixed discrete subsets of $\Omega$ (see \cite{firstpaper:FGS96}), approximation of proper embeddings on smooth curves contained in a finitely connected planar domain $D$ into $\C^n$ (with $n\ge2$) by proper holomorphic embeddings $f\colon D\hookrightarrow\C^n$ (see \cite{firstpaper:M13}), and the existence of proper holomorphic embeddings into $\C^2$ of certain infinitely connected domains $\Omega$ lying inside a bordered Riemann surface $\overline D$ knowing to admit a proper holomorphic embedding into $\C^2$ \cite{firstpaper:M09}.

\section{Preliminaries}

\subsection{Notation}
We will use the following notation. 
\begin{itemize}
\item[$\bullet$] Given $K\subset\mathbb C$ and a positive real number $\delta$, we define the open subset
\begin{align*}
K({\delta})&:=\{z\in\C\;:\;\dist(z,K)<\delta\}.\\
\end{align*}
\item[$\bullet$] For a closed subset $K\subset\mathbb P^1$ we denote by $\mathcal O(K)$ the algebra of continuous functions 
$f\in\mathcal C(K)$ such that there exists an open set $U\subset\mathbb P^1$ containing $K$, and $F\in\mathcal O(U)$ with $F|_K=f$.

\item[$\bullet$]We let $\pi_j\colon\mathbb C^2\rightarrow\mathbb C$ denote the projection onto the $j$--th coordinate line, and given a point $p\in\C^2$ we denote the vertical complex line through $p$ by
$$
\Lambda_p:=\pi_1^{-1}(\pi_1(p))=\{(\pi_1(p),\zeta)\;:\;\zeta\in\C\}\;.
$$
\item[$\bullet$] If $X$ is a domain with piecewise smooth boundary in a Riemann surface $Y$, $f\colon \overline X\rightarrow\mathbb C^2$ is a holomorphic
map, and if $a\in\partial X$ is a smooth boundary point, we say that $f(a)$ is $\pi_1$--exposed for $f(\overline X)$ if  
$f(\overline X)\cap \Lambda_{f(a)}=\{f(a)\}$, and $\pi_1\circ f$ is an embedding sufficiently close to $a$. 
Similarly, for a smooth map $\gamma\colon [0,1]\rightarrow\mathbb C^2$, we say that $\gamma([0,1])$ is exposed at $\gamma(1)$
if $\gamma([0,1])\cap \Lambda_{\gamma(1)}=\{\gamma(1)\}$, and $\pi_1\circ \gamma$ is an embedding sufficiently close to $1$.
\end{itemize}

\subsection{Results}
In this section, we collect the technical tools needed to prove Theorem \ref{firstpaper:main}. \

\medskip

The following result is essentially Theorem 4.2 in \cite{firstpaper:ForstnericWold2009}. Although (1) and (2) were not stated explicitly in \cite{firstpaper:ForstnericWold2009}
they are evident from the proof therein and were added to the corresponding Theorem 2.8 in \cite{firstpaper:ForstnericWold2013}. 
\begin{thm}\label{firstpaper:exposethm}
Let $X$ be a smoothly bounded domain in a Riemann surface $Y$, $f\colon \overline X\hookrightarrow\mathbb C^2$ a holomorphic embedding, 
and $a_1,\dots,a_m\in\partial X$.  Let $\gamma_j\colon [0,1]\rightarrow\mathbb C^2$ $(j=1,\dots,m)$ be smooth embedded arcs with pairwise
disjoint images satisfying the following properties:
\begin{itemize}
\item[$\bullet$] $\gamma_j([0,1])\cap f(\overline X)=\gamma_j(0)=f(a_j)$ for $j=1,\dots,m$, and
\item[$\bullet$] the image $E:=f(\overline X)\cup\bigcup_{j=1}^m\gamma_j([0,1])$ is $\pi_1$--exposed at $\gamma_j(1)$ for $j=1,\dots,m$. 
\end{itemize}
Then given an open set $V\subset\mathbb C^2$ containing $\bigcup_{j=1}^m\gamma_j([0,1])$, an open set $U\subset Y$
containing the points $a_j$ that satisfies $f(\overline{U\cap X})\subset V$, and 
any $\epsilon>0$, there exists a holomorphic embedding $F\colon \overline X\hookrightarrow\mathbb C^2$ with the following properties:
\begin{enumerate}[label=(\arabic*)]
\item $\|F-f\|_{\overline X\setminus U}<\epsilon$, \label{firstpaper:q}
\item $F(\overline{U\cap X})\subset V$, and
\item $F(a_j)=\gamma_j(1)$, and $F(\overline X)$ is $\pi_1$--exposed at $F(a_j)$ for $j=1,\dots,m$.
\end{enumerate}
\end{thm}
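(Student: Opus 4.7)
The plan is to realize $F$ as $\Psi \circ f$ for a suitable automorphism $\Psi$ of $\mathbb{C}^2$ produced via the Andersén--Lempert theorem. The strategy has three main steps: (i) verify that the compact set $E = f(\overline{X}) \cup \bigcup_{j=1}^m \gamma_j([0,1])$ is polynomially convex; (ii) construct a holomorphic map on a neighborhood of $E$ which is the identity near $f(\overline{X \setminus U})$ and slides each $f(a_j)$ along the arc $\gamma_j$ onto $\gamma_j(1)$; and (iii) approximate that local map uniformly on $E$ by a global automorphism of $\mathbb{C}^2$, with a final small correction to secure the exact equalities $F(a_j) = \gamma_j(1)$.

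For (i), the image $f(\overline{X})$ is a smoothly bounded embedded Riemann surface with boundary in $\mathbb{C}^2$, hence polynomially convex and possessing a basis of Stein neighborhoods by standard facts. The arcs $\gamma_j$ are pairwise disjoint, meet $f(\overline{X})$ only at their initial endpoints $f(a_j)$, and the entire set $E$ is $\pi_1$--exposed at each $\gamma_j(1)$. This exposedness enables standard arc--attachment arguments (à la Stolzenberg, Forstneri\v c) to conclude that $E$ remains polynomially convex: on a small portion of $\gamma_j$ adjacent to $\gamma_j(1)$ one separates exterior points from $E$ by polynomials in the first coordinate $z_1$ alone, while away from the free end each arc can be absorbed into a Stein neighborhood of $f(\overline{X})$.

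For (ii), shrink $U$ if necessary so that $f(\overline{U \cap X}) \subset V$ and choose pairwise disjoint tubular neighborhoods $W_j \subset V$ of the arcs $\gamma_j$, disjoint from $f(\overline{X \setminus U})$. Inside each $W_j$ one biholomorphically straightens $\gamma_j$ and uses a holomorphic translation along the straightened arc to build a local map sending $f(a_j)$ to $\gamma_j(1)$ while preserving $\pi_1$--exposedness at the new endpoint. Combining these local maps on $\bigcup_j W_j$ with the identity on a disjoint neighborhood of $f(\overline{X \setminus U})$ yields a holomorphic map $\psi$ defined on an open neighborhood $W$ of $E$ realizing the desired push.

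For (iii), the Andersén--Lempert theorem applied to the polynomially convex set $E$ allows $\psi$ to be approximated uniformly on $E$ by an automorphism $\Psi \in \Aut(\mathbb{C}^2)$. Setting $F_0 = \Psi \circ f$ yields (1) and (2) by the estimate, and the $\pi_1$--exposedness of $F_0(\overline{X})$ at $F_0(a_j)$ is inherited from that of $\psi(E)$ provided the approximation is close enough, since exposedness is an open condition in the fine topology. A final composition with a small automorphism --- for instance a product of shears supported in tiny disjoint balls around the points $\gamma_j(1)$ and localized away from $f(\overline{X \setminus U})$ --- moves $F_0(a_j)$ exactly onto $\gamma_j(1)$ without disturbing the other properties, producing the required $F$. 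The main obstacle is step (i): polynomial convexity must be extracted from the purely geometric hypothesis on the arcs, and this is precisely where the $\pi_1$--exposedness at the free endpoints $\gamma_j(1)$ is essential.
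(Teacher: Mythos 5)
First, a point of reference: the paper does not prove this statement at all --- it is quoted as Theorem 4.2 of Forstneri\v{c}--Wold (2009), so the comparison must be with the proof there. That proof is \emph{intrinsic} and uses no automorphisms of $\mathbb{C}^2$ and no polynomial convexity: one attaches short arcs $\lambda_j\subset Y$ to $\overline X$ at the points $a_j$, extends $f$ to a smooth embedding of $\overline X\cup\bigcup_j\lambda_j$ that parametrizes the $\gamma_j$, applies Mergelyan approximation on this admissible set to obtain a holomorphic embedding $g$ of a neighbourhood, and then precomposes with a conformal map of $\overline X$ onto a slightly larger domain $\overline{X'}$ containing thin tentacles around the $\lambda_j$, which carries $a_j$ to the tip of $\lambda_j$; the map $F=g\circ\sigma$ then does the job after a small correction. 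Your proposal, $F=\Psi\circ f$ with $\Psi\in\operatorname{Aut}\mathbb{C}^2$ obtained from Anders\'en--Lempert theory, is a genuinely different route, and unfortunately it has gaps at each of its three steps.

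Concretely: (i) it is \emph{not} a standard fact that a smoothly bounded embedded bordered Riemann surface in $\mathbb{C}^2$ is polynomially convex --- the flat annulus $\{(z,0): 1/2\le|z|\le 1\}$ is a smooth holomorphic embedding of an annulus whose polynomial hull is the full disc $\{(z,0):|z|\le 1\}$ --- so polynomial convexity of $E$ cannot be extracted from the hypotheses and would have to be added as an assumption or arranged separately. (ii) The local model is not constructed: a ``holomorphic translation along the straightened arc'' moves every point of the tube $W_j$ by the same amount, so it cannot be glued to the identity near $f(\overline{X\setminus U})$; what is actually needed is an injective holomorphic map on a neighbourhood that is close to the identity at the junction, carries $f(a_j)$ the full length of $\gamma_j$, and compresses the two--dimensional piece $f(\overline{U_j\cap X})$ into the thin tube $V$ --- this is essentially the entire content of the theorem, and your step (ii) assumes it. (iii) Even granting (i) and (ii), approximating an injective holomorphic map by automorphisms requires (Forstneri\v{c}--Rosay) an isotopy $\psi_t$ of injective maps with $\psi_t(E)$ polynomially convex for \emph{all} $t$; here the intermediate sets are $f(\overline X)$ with partially extended tentacles, and their polynomial convexity is neither automatic nor addressed. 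For these reasons the proposal does not constitute a proof; the intrinsic Mergelyan-plus-conformal-stretching argument avoids all three issues, which is precisely why it is the one used in the literature.
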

The following is essentially Lemma 1 in \cite{firstpaper:Wold2006}. The difference is that Lemma 1 was stated for $\pi_1$ instead of $\pi_2$, and 
for curves $\lambda\colon [0,+\infty)\rightarrow\mathbb C^2$ instead of  $\lambda\colon (-\infty,+\infty)\rightarrow\mathbb C^2$ -- neither make a difference for the proof. 

\begin{lem}\label{firstpaper:rmcurves}
Let $K\subset\mathbb C^2$ be a polynomially convex compact set, and let 
$$
\Lambda=\{\lambda_j(t):j=1,\dots,m,\;\; t\in (-\infty,+\infty)\}
$$ 
be a collection of pairwise disjoint smooth curves in
$\mathbb C^2\setminus K$ without self intersection, enjoying the \emph{immediate projection property} (with respect to $\pi_2$):
\begin{itemize}
\item $\lim_{|t|\rightarrow\infty}|\pi_2(\lambda_j(t))|=\infty$ for all $j$, and
\item there exists an $M>0$ such that $\mathbb C\setminus(R\, \overline\B\cup\pi_2(\Lambda))$ does not 
contain any relatively compact components for $R\geq M$.
\end{itemize}
Then for any $r>0$ and $\epsilon>0$ there exists $\phi\in\operatorname{Aut}\mathbb C^2$
such that the following are satisfied: 
\begin{enumerate}[label=(\roman*)]
\item $\|\phi-\id\|_K<\epsilon$\,, and 
\item $\phi(\Lambda)\subset\mathbb C^2\setminus r\overline{\mathbb B^2}$.
\end{enumerate}
 
\end{lem}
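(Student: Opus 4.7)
The goal is to construct $\phi$ as a composition of polynomial shears of $\mathbb{C}^2$, exploiting the immediate projection property to run Mergelyan approximation on the $w$-plane. First choose $R \geq M$ large enough that $K \subset R\overline{\mathbb{B}^2}$ and $R > r$. By condition (i), the preimages $\lambda_j^{-1}(\{|w| \leq R\})$ are bounded, so the set $A := \Lambda \cap \pi_2^{-1}(R\overline{\B})$ is a compact union of finitely many arcs, while the complementary tails $\Lambda \setminus A$ already lie in $\{|\pi_2| > r\} \subset \mathbb{C}^2 \setminus r\overline{\mathbb{B}^2}$.

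The first shear handles the tails. By condition (ii), the complement in $\mathbb{C}$ of $R\overline{\B} \cup \pi_2(\Lambda)$ has no relatively compact components, so Mergelyan's theorem on suitable compact exhaustions of $R\overline{\B} \cup \pi_2(\Lambda)$ produces, for any $N,\delta > 0$, a polynomial $p(w)$ which is $\delta$-small on $R\overline{\B}$ (hence on $\pi_2(K)$) and of modulus at least $N$ on the tails of $\pi_2(\Lambda)$ outside $R\overline{\B}$. The shear $\psi_1(z,w) := (z+p(w),w)$ is then $\delta$-close to the identity on $K$, hardly moves the compact core $A$ (since $|p|$ is small on $\pi_2(A)$), and sends each point of $\Lambda \setminus A$ to a point with huge first coordinate, so $\psi_1(\Lambda \setminus A)$ stays outside $r\overline{\mathbb{B}^2}$.

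To push the remaining compact arcs $\psi_1(A)$ outside $r\overline{\mathbb{B}^2}$ while leaving $\psi_1(K)$ essentially fixed, I would compose with a further shear $\psi_2(z,w) := (z,w+q(z))$, with $q$ a polynomial small on $\pi_1(\psi_1(K))$ and large on $\pi_1(\psi_1(A))$; this preserves the location of $\psi_1(\Lambda \setminus A)$ because its first coordinate is unchanged and already large. The main obstacle is producing such a $q$: the compact sets $\pi_1(K)$ and $\pi_1(A)$ in $\mathbb{C}$ may overlap, so they cannot be separated by a single polynomial directly. I would resolve this by either iterating shears alternating between the $z$- and $w$-directions (each iteration increasing the effective separation of the projections), or by invoking Andersén--Lempert theory to realize an isotopy of polynomially convex compacts $K \cup A \leadsto K \cup A'$ (with $A'$ pushed far out of $r\overline{\mathbb{B}^2}$) by an automorphism of $\mathbb{C}^2$ approximating the identity on $K$. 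Setting $\phi := \psi_2 \circ \psi_1$ (or the Andersén--Lempert automorphism obtained at the end of the iteration), and choosing $\delta, N$ and the remaining parameters appropriately, gives the desired (i) and (ii).
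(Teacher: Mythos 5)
The paper does not actually prove this lemma: it is quoted (up to swapping $\pi_1$ for $\pi_2$ and allowing two--sided parametrizations) from Lemma~1 of \cite{firstpaper:Wold2006}, so there is no in--paper argument to compare against. Judged on its own terms, your plan correctly isolates the two pieces of $\Lambda$ --- the compact core $A=\Lambda\cap\pi_2^{-1}(R\overline{\B})$ and the tails, which already avoid $r\overline{\mathbb B^2}$ since their second coordinate exceeds $R>r$ --- and correctly senses that the core must be expelled by something like Anders\'en--Lempert theory. But there is a genuine gap at exactly the crux of the lemma.

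First, the claim that $\psi_1(z,w)=(z+p(w),w)$ gives the tails a ``huge first coordinate'' is false: nothing in the hypotheses bounds $\pi_1$ on $\Lambda$, so on a tail point the image's first coordinate $z+p(w)$ can be small no matter how large $|p|$ is on $\pi_2(\Lambda)\setminus R\B$ (take $z$ near $-p(w)$). Thus $\psi_1$ accomplishes nothing of substance --- the tails were already outside $r\overline{\mathbb B^2}$ and the core is essentially fixed --- and the protection you invoke for the tails in the second step evaporates. Second, and this is the real gap: whatever automorphism you use to expel the core (your $\psi_2$, an alternating iteration of shears, or a Forstneri\v c--Rosay/Anders\'en--Lempert approximation of an isotopy $K\cup A\leadsto K\cup A'$), it is controlled only on a \emph{compact} set near $K\cup A$; on the unbounded tails it is a priori arbitrary and can drag parts of them into $r\overline{\mathbb B^2}$. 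Enlarging the compact set merely pushes the problem further out; it never disappears. This is precisely where the second half of the immediate projection property must enter in an essential, global way --- the expelling automorphism has to be assembled from maps whose coefficient functions are controlled on the whole unbounded set $R\overline{\B}\cup\pi_2(\Lambda)$, so that their effect on \emph{all} of $\Lambda$, tails included, is known. Your plan uses that hypothesis only once, to build the (ineffective) first shear, and leaves the tails unprotected exactly where protection is needed; as written, neither of your two proposed fallbacks closes this.
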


\section{The Induction Step}\label{firstpaper:sec:ind}

We will now describe an inductive procedure to construct a nested sequence of closed rectangles $Q_n\subset Q$,
along with holomorphic embeddings $f_n\colon \overline{\mathbb P^1\setminus Q_n}\rightarrow\mathbb C^2$ that will 
be used to construct a proper holomorphic embedding 
$$
f\colon\mathbb P^1\setminus \bigcap_nQ_n\hookrightarrow\mathbb C^2,
$$ 
where $C=\bigcap_nQ_n$ will be a Cantor set, where the construction will enable us to ensure that its Lebesgue measure $\mu(C)$
is arbitrarily close to 4. \

\medskip

Set $Q_1:=Q$ and set $C_1:= \overline{\mathbb P^1\setminus Q_1}$. To construct $Q_2$ from $Q_1$ we let $l_1$  be the vertical line segment dividing $Q_1$ into two equal pieces. 
Then, for $0<\delta_2<<1$, we set 
$$
Q_2:=Q_1\setminus l_1(\delta_2), 
$$
and we set $C_2:=\overline{\mathbb P^1\setminus Q_2}$.
Then $C_2$ is the complement of the disjoint union of $2$ open rectangles $(Q_2^{j})^\circ,\;j=1,2$, contained in $Q_1$.\

\medskip

Assume now that we have constructed a nested sequence $\{Q_j\}_{j=1}^n$, $n\geq 2$, where 
$$
Q_n=\bigsqcup_{j=1}^{2^{n-1}} Q_n^{j}
$$ 
is the disjoint union of $2^{n-1}$ closed  rectangles contained in $Q_{n-1}$, along with an increasing sequence of closed subsets $C_n:=\overline{\Bbb P^1\setminus Q_n}$ in $\mathbb P^1$.
We let $l_n^{j}$ be the line segment -- vertical for $n$ odd, horizontal for $n$ even -- dividing $Q_n^{j}$ into two equal pieces, 
we set $l_n:=\bigsqcup_{j=1}^{2^{n-1}}l_n^{j}$, and for $\delta_{n+1}>0$ small enough we define 
$$
Q_{n+1}:= Q_n\setminus l_n(\delta_{n+1}) =: \bigsqcup_{j=1}^{2^n} Q_{n+1}^{j},
$$
and
$$
C_{n+1}:=\overline{\mathbb P^1\setminus Q_{n+1}}\;.
$$

\begin{prop}\label{firstpaper:indstep}
With the procedure above assume that we have constructed $Q_n$ and $C_n$ for $n\geq 1$. 
Let $K_n\subset C_n^\circ$ be a compact set, let $r_n>0$, and  
assume that $f_n\colon C_n\hookrightarrow\C^2$ is a holomorphic embedding such that
\begin{align}\label{firstpaper:1}
f_n(\overline{C_n\setminus K_n})\subset\C^2\setminus r_{n}\overline{\B^2}\;.
\end{align}
Then for any $\varepsilon_n>0$ and any $r_{n+1}>r_n$, there exist $\delta_{n+1}>0$ arbitrarily close to zero 
and a holomorphic embedding $f_{n+1}\colon C_{n+1}\hookrightarrow\C^2$ such that
\begin{enumerate}[label=(\alph*)]
\item $\|f_{n+1}-f_n\|_{K_n}<\varepsilon_n$\;,\label{firstpaper:prop:one}\
\item $f_{n+1}(\overline{C_{n+1}\setminus K_n})\subset \C^2\setminus r_{n}\overline{\B^2}$\;,\label{firstpaper:prop:two}
\item $f_{n+1}(\partial C_{n+1})\subset\C^2\setminus r_{n+1}\overline{\B^2}$\;.\label{firstpaper:prop:three}
\end{enumerate}
\end{prop}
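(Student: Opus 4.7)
The strategy is to modify $f_n$ at the $2^n$ corner points $p_n^{j,\pm}$ where the line segments $l_n^j$ meet $\partial Q_n^j$; these are the boundary points of $C_n$ through which the new strips $\overline{l_n^j(\delta_{n+1})}$ attach as we pass from $C_n$ to $C_{n+1}$. We begin by applying Theorem~\ref{firstpaper:exposethm}: choose pairwise disjoint smooth embedded arcs $\gamma_n^{j,\pm}\colon[0,1]\to\C^2$ with $\gamma_n^{j,\pm}(0)=f_n(p_n^{j,\pm})$, lying---except at the base point---in $\C^2\setminus(r_n\overline{\B^2}\cup f_n(C_n))$, and making the union $f_n(C_n)\cup\bigcup_{j,\pm}\gamma_n^{j,\pm}([0,1])$ $\pi_1$-exposed at each tip $q_n^{j,\pm}:=\gamma_n^{j,\pm}(1)$. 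Pick a $\mathbb P^1$-neighborhood $U$ of the points $p_n^{j,\pm}$ disjoint from $K_n$, and a $\C^2$-neighborhood $V\subset\C^2\setminus r_n\overline{\B^2}$ of the arcs with $f_n(\overline{U\cap C_n})\subset V$. The theorem produces an embedding $F_n\colon C_n\hookrightarrow\C^2$ arbitrarily close to $f_n$ on $\overline{C_n}\setminus U\supset K_n$, satisfying $F_n(\overline{U\cap C_n})\subset V$, with $F_n(p_n^{j,\pm})=q_n^{j,\pm}$ being $\pi_1$-exposed. In particular, $F_n(\overline{C_n\setminus K_n})\subset\C^2\setminus r_n\overline{\B^2}$.

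Next, for each $j$ choose a smooth embedded bridge $\beta_j$ in $\C^2\setminus r_n\overline{\B^2}$ joining $q_n^{j,+}$ to $q_n^{j,-}$, pairwise disjoint and disjoint from $F_n(C_n)$ except at the endpoints. Attach two tails to each $\beta_j$ escaping to infinity in the $\pi_2$-direction, producing pairwise disjoint curves $\lambda_j\colon\R\to\C^2\setminus F_n(K_n)$ with the immediate projection property required by Lemma~\ref{firstpaper:rmcurves}. After, if necessary, a preliminary small modification ensuring polynomial convexity of $F_n(K_n)$, the lemma yields $\phi\in\Aut\C^2$ arbitrarily close to the identity on $F_n(K_n)$ with $\phi(\lambda_j)\subset\C^2\setminus r_{n+1}\overline{\B^2}$. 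Set $\widetilde F_n:=\phi\circ F_n$; choosing the errors in the exposure and the removal steps small, $\widetilde F_n$ is close to $f_n$ on $K_n$ and still maps $\overline{C_n\setminus K_n}$ outside $r_n\overline{\B^2}$.

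Finally, we extend $\widetilde F_n$ across the new strips. For $\delta_{n+1}$ sufficiently small, construct a continuous map $g\colon C_{n+1}\to\C^2$ which equals $\widetilde F_n$ on $C_n$ and maps each strip $\overline{l_n^j(\delta_{n+1})}$ as a thin tube following the pushed-out bridge $\phi(\beta_j)$. Since $\mathbb P^1\setminus C_{n+1}$ is a disjoint union of simply connected rectangles, a Mergelyan-type approximation on $C_{n+1}$ produces a holomorphic embedding $f_{n+1}\in\mathcal O(C_{n+1})$ arbitrarily close to $g$ (and in particular close to $\widetilde F_n$ on $K_n$). Property (a) follows from the approximation; property (b) holds because $\widetilde F_n(\overline{C_n\setminus K_n})$ and the strip images (close to $\phi(\beta_j)$) both lie in $\C^2\setminus r_n\overline{\B^2}$; and property (c) holds since $\partial C_{n+1}$ is either near the long sides of a strip (hence near $\phi(\beta_j)\subset\C^2\setminus r_{n+1}\overline{\B^2}$) or near a piece of $\partial C_n$ pushed past $r_{n+1}$ by $\phi$. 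The main technical obstacle is establishing the polynomial convexity required by Lemma~\ref{firstpaper:rmcurves} and executing the continuous-to-holomorphic approximation so that the resulting map is globally an embedding matching $\widetilde F_n$ on $C_n$; both are standard but delicate points in constructions of this kind.
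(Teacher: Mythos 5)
Your assembly of the ingredients has a fatal gap at property \ref{firstpaper:prop:three}: you never gain control of $f_{n+1}$ on the \emph{old} part of $\partial C_{n+1}$. Each new rectangle $Q_{n+1}^{j}$ has a boundary consisting of one new edge along the removed strip together with edges inherited from $\partial Q_n$. Your automorphism $\phi$ comes from Lemma \ref{firstpaper:rmcurves} applied to the auxiliary bridge--curves $\lambda_j$; that lemma guarantees $\phi(\lambda_j)\subset\C^2\setminus r_{n+1}\overline{\B^2}$ and $\phi\approx\id$ on a polynomially convex compact set, but it says nothing about $F_n(\partial C_n)$, which lies neither in $\Lambda$ nor in $K$. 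Your assertion that the old boundary is ``pushed past $r_{n+1}$ by $\phi$'' is therefore unsupported --- and it cannot be supported by this lemma, because $F_n(\partial C_n)$ is a union of \emph{compact} closed curves, whereas the curves admissible in Lemma \ref{firstpaper:rmcurves} must be unbounded with $|\pi_2(\lambda_j(t))|\to\infty$. This is exactly why the paper's proof inserts the rational shear $g_{n+1}(z,w)=\bigl(z,\,w+\sum_j\alpha_j/(\pi_1(p_j)-z)\bigr)$ with poles at boundary points that have first been $\pi_1$--exposed: the shear blows each compact boundary circle of $\tilde C_{n+1}$ up into an unbounded curve, and only then can the \emph{entire} boundary be taken as $\Lambda$ and moved outside $r_{n+1}\overline{\B^2}$. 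Exposing corner points and then merely attaching bridges, as you do, never produces this effect. A related unaddressed point infects \ref{firstpaper:prop:two}: since $\phi$ is close to the identity only on (the hull of) $F_n(K_n)$, nothing prevents $\phi\circ F_n(\overline{C_n\setminus K_n})$ from re-entering $r_n\overline{\B^2}$; the paper forces $\phi\approx\id$ on the larger polynomially convex set $(r_n+\eta)\overline{\B^2}\cup K'$ precisely to control this.

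A secondary problem is your final approximation step. You propose to apply Mergelyan on $C_{n+1}$ to a map that is only \emph{continuous} on the open strips (``a thin tube following the bridge''), but the interior of $C_{n+1}$ contains those strips, and Mergelyan-type theorems require holomorphy on the interior; on top of that you need the approximant to remain an embedding. The paper sidesteps both difficulties by doing the Mergelyan approximation at the very beginning, on $C_n\cup l_n$, where the added set $l_n$ is a union of arcs with empty interior so that only a smooth embedded extension is needed, and by keeping every subsequent operation (exposing, shear, automorphism) holomorphic on a neighbourhood of $\tilde C_{n+1}$. I would reorganize your argument along those lines: extend over $l_n$ first, remove the strips, expose one boundary point on each new rectangle, send the exposed points to infinity with a rational shear, and only then invoke Lemma \ref{firstpaper:rmcurves} on the resulting unbounded boundary curves.
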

\begin{proof}
We extend $f_n$ to a smooth embedding $\widetilde f_n\colon C_n\cup l_n\hookrightarrow\C^2$ with $\widetilde f_n(l_n)$ lying close enough to $f_n(\partial C_n)$ so that by $\eqref{firstpaper:1}$ 
we get
\begin{align}\label{firstpaper:2}
\widetilde f_n(l_n)\subset\C^2\setminus r_{n}\overline{\B^2}\;.
\end{align}
Now Mergelyan's theorem (see e.g., \cite{firstpaper:FornaessForstnericWold}) ensures the existence of a holomorphic embedding $\widehat f_{n+1}\colon C_n\cup l_n\hookrightarrow\mathbb C^2$ such that
$$
\|\widehat f_{n+1} - \widetilde f_n\|_{C_n\cup l_n}<\frac{\epsilon_n}{4},
$$
and 
\begin{align}
\widehat f_{n+1}((\overline{C_n\cup l_n)\setminus K_n})\subset\C^2\setminus r_{n}\overline{\B^2}\;.
\end{align}

Then by choosing a preliminary $\tilde\delta_{n+1}>0$ sufficiently small (to be shrunk further later), and letting the set corresponding to $C_{n+1}$
be denoted by $\tilde C_{n+1}$ (and similarly for $Q_{n+1}$), we have that $\widehat f_{n+1}\in\mathcal O(\tilde C_{n+1})$, and 
\begin{align}\label{firstpaper:4}
\widehat f_{n+1}(\overline{\tilde C_{n+1}\setminus K_n})\subset \C^2\setminus r_{n}\overline{\B^2}\;.
\end{align}

Next, recall that $\tilde Q_{n+1}$ is constructed from $Q_n$
by splitting each $Q_n^{j}$ into two smaller rectangles $\tilde Q_n^{j,1}$ and $\tilde Q_n^{j,2}$, by removing 
the strip $l_n^{j}(\tilde\delta_{n+1})$. Choose smooth boundary points 
\begin{equation}\label{firstpaper:bdchoose}
\tilde a_{ji}\in\partial \tilde Q_n^{j,i}\cap \overline{l_n^{j}(\tilde\delta_{n+1})},
\end{equation}
for $i=1,2$, and $j=1,\dots,2^{n-1}$, and relabel these to get $2^n$ boundary points $a_j$, one in each $\partial\tilde Q_{n+1}^{j}$. \

Now choose $2^n$ pairwise disjoint smoothly embedded arcs $\gamma_j\colon [0,1]\hookrightarrow\mathbb C^2$ disjoint from $r_n\overline{\mathbb B^2}$, such that 
$$
\gamma_j([0,1])\cap \widehat f_{n+1}(\tilde C_{n+1}) = \widehat f_{n+1}(a_j)=\gamma_j(0),
$$
and such that each point $\gamma_j(1)$ is $\pi_1$--exposed for the surface
$$
\widehat f_{n+1}(\tilde C_{n+1})\cup\bigcup_{j=1}^{2^n}\gamma_j([0,1]).
$$ 
Choose an open set $V\subset\C^2$ containing the arcs $\gamma_j([0,1])$ with $\overline V\cap r_n\overline{\mathbb B^2}=\emptyset$ and take $U\subset\Bbb P^1$ to be the union of sufficiently small open balls centered at the points $a_j$, so that $U\cap K_n=\emptyset$ and $\widehat f_{n+1}(\overline{U\cap\tilde C_{n+1}})\subset V$. Then Theorem \ref{firstpaper:exposethm} furnishes a holomorphic embedding $F_{n+1}\colon \tilde C_{n+1}\hookrightarrow\mathbb C^2$ 
such that $p_j:=F_{n+1}(a_j)=\gamma_j(1)$ is an exposed point for $F_{n+1}(\tilde C_{n+1})$ for each $j$, and 
\begin{align}\label{firstpaper:5}
\|F_{n+1}-\widehat f_{n+1}\|_{K_{n}}<\frac{\varepsilon_n}4,
\end{align}
and also 
\begin{align}\label{firstpaper:6}
F_{n+1}(\overline{\tilde C_{n+1}\setminus K_{n}})\subset\C^2\setminus r_{n}\overline{\B ^2}\;.
\end{align}
Now choose $\alpha_j\in\mathbb C$, $j=1,\dots,2^n$, such that setting 
$$
g_{n+1}(z,w):=\left(z,w+\sum_{j=1}^{2^n}\frac{\alpha_{j}}{\pi_1(p_{j})-z}\right)\;,
$$
we have that 
$$
\|g_{n+1}\circ F_{n+1}-F_{n+1}\|_{K_n}<\frac{\epsilon_n}{4},
$$
and such that the conditions in Lemma \ref{firstpaper:rmcurves} are satisfied for the collection $\Lambda$ of curves
$$
\lambda_{ji} := g_{n+1}\circ F_{n+1}(\partial \tilde Q_{n}^{j,i}),\;\;\;i=1,2,\;j=1,\dots,2^{n-1}\;,
$$
that 
are the boundary of the unbounded complex curve
$$
X_{n+1}:=g_{n+1}\circ F_{n+1}(\tilde C_{n+1})\;.
$$
Note that we still have 
\begin{align}\label{firstpaper:8}
g_{n+1}\circ F_{n+1}(\overline{\tilde C_{n+1}\setminus K_{n}})\subset\C^2\setminus r_{n}\overline{\B ^2}\;.
\end{align}
Choose $0<\eta<<1$ such that $(r_n+\eta)\overline{\mathbb B^2}\cap\Lambda=\emptyset$. We may choose a compact polynomially convex set $K'\subset X_{n+1}^\circ$  with $g_{n+1}\circ F_{n+1}(K_n)\subset K'$ such that $L=(r_n+\eta)\overline{\mathbb B^2}\cup K'$ is polynomially convex (see e.g., Theorem 4.14.6 in \cite{firstpaper:Forstnericbook}).
Then by Lemma \ref{firstpaper:rmcurves} there exists $\phi_{n+1}\in\operatorname{Aut}\C^2$ such that
\begin{align}\label{firstpaper:9}
\phi_{n+1}(\Lambda)\subset\C^2\setminus r_{n+1}\overline{\B ^2}\;,
\end{align}
and 
\begin{align}\label{firstpaper:10}
\|\phi_{n+1}-\id\|_{L}<\frac{\min\{\eta,\varepsilon_n\}}4\;.
\end{align}
\newline
We consider the map $f_{n+1}\colon \tilde C_{n+1}\rightarrow\mathbb C^2$ defined by
$$
f_{n+1}:=\phi_{n+1}\circ g_{n+1}\circ F_{n+1}\;.
$$
We have that \ref{firstpaper:prop:one} and \ref{firstpaper:prop:two} (with $\tilde C_{n+1}$ instead of $C_{n+1}$) clearly hold, but now $f_{n+1}$ has singularites on $\partial\tilde C_{n+1}$.
However, we now consider $0<\delta_{n+1}<\tilde\delta_{n+1}$ to see what happens on $C_{n+1}$. As the points to expose are taken on the boundary components (see \eqref{firstpaper:bdchoose}), the singularities of $f_{n+1}$ are not contained in $C_{n+1}$ for any such $\delta_{n+1}$, 
and so $f_n\colon C_{n+1}\rightarrow\mathbb C^2$ is holomorphic.  Finally, since $\partial C_{n+1}$ will converge to $\partial\tilde C_{n+1}$ as
$\delta_{n+1}\rightarrow\tilde\delta_{n+1}$ we have \ref{firstpaper:prop:three} for $\delta_{n+1}$ sufficiently close to $\tilde\delta_{n+1}$.

\end{proof}

\section{Proof of Theorem \ref{firstpaper:main}}

We will prove Theorem \ref{firstpaper:main} via an inductive construction, where Proposition \ref{firstpaper:indstep} provides us with the inductive step.  Without loss 
of generality, we may assume that $\epsilon<1$.  \

\subsection{The Induction Scheme}

To start the induction, with the notation as in Section \ref{firstpaper:sec:ind}, we define $f_1\colon C_1\hookrightarrow\mathbb C^2$ by 
$f_1(\zeta):=(2/\zeta,0)$ for $\zeta\in\mathbb C$, and $f(\infty):=(0,0)$. Setting $r_1=1$ we note that $f_1(\partial C_1)\subset\mathbb C^2\setminus r_1\overline{\mathbb B^2}$, so 
if we choose $0<\delta_1'<<1$ sufficiently close to zero, and set 
$$
K_1:=\mathbb P^1\setminus Q_1(\delta_1'), 
$$
we have that $K_1\subset C_1^\circ$ and $f_1(\overline{C_1\setminus K_1})\subset \mathbb C^2\setminus r_1\overline{\mathbb B^2}$.  Then the conditions
in Proposition \ref{firstpaper:indstep} are satisfied with $n=1$, and setting $\delta_2\le\epsilon\cdot 2^{-4}$, $r_2=2$, we let $f_2$ be the map furnished by the proposition, with 
$\epsilon_1$ explained in the induction scheme below. We then 
choose $\delta_2'<\delta_2/2$ sufficiently close to zero such that if we set 
$$
K_2:=\mathbb P^1\setminus Q_2(\delta_2'), \\
$$
we have $K_2\subset C_2^\circ$ and $f_2(\overline{C_2\setminus K_2})\subset \mathbb C^2\setminus r_2\overline{\mathbb B^2}$.  \
\medskip
\newline

Let us now state our induction hypothesis $I_n$ for some $n\geq 2$.  We assume that we have found and constructed the following. 

\begin{enumerate}[label=(\roman*)$_n$]
\item A decreasing sequence  $\delta_2>\delta_3>\cdots > \delta _n$ of numbers with $\delta_k\leq \epsilon\cdot 2^{-2k}$
such that  $\{Q_k\}_{k=1}^n$ is a nested sequence of rectangles. \label{firstpaper:ind:one}\\

\item A decreasing sequence  $\delta_1'>\delta_2'>\cdots > \delta _n'$  of numbers with $\delta'_1, \delta_2'$ as above, and 
$\delta_k'<\delta_k/2$ for $k=1,\dots,n$, and holomorphic embeddings $f_k\colon C_k\hookrightarrow\C^2$ such that, setting 
$K_k:=\mathbb P^1\setminus Q_k(\delta_k')$, we have that $f_{m}(\overline{C_{m}\setminus K_k})\subset\mathbb C^2\setminus r_k\overline{\mathbb B^2}$
for  $1\leq k\leq m\leq n$, where $r_k\geq k$. \label{firstpaper:ind:two}\\

\item A sequence of positive numbers  $\{\eta_k\}_{k=2}^n$ such that if $f\colon K_k\rightarrow\mathbb C^2$ 
is a holomorphic map with $\|f-f_k\|_{K_k}<\eta_k$, then $f\colon K_{k-1}\hookrightarrow\mathbb C^2$  is an embedding. \label{firstpaper:ind:three}\\

\item A sequence of positive numbers $\{\varepsilon_k\}_{k=1}^{n-1}$ such that $\varepsilon_{k+j}<\eta_k\cdot2^{-j-1},\; j\leq n-k-1$, with 
$\|f_{k}-f_{k-1}\|_{K_{k-1}}<\varepsilon_{k-1}$ for $k=2,\dots,n$.\label{firstpaper:ind:four}\\
\end{enumerate}

Our constructions above gives \ref{firstpaper:ind:one}, \ref{firstpaper:ind:two} and \ref{firstpaper:ind:four} in the case $n=2$ (possibly shrinking $\epsilon_1$). 
Then, choosing $\eta_2$ small enough, gives $f$ and $f'$ close to $f_2$ and $f_2'$ respectively (the latter by Cauchy estimates) on $K_2$ such that $f$ is injective and $f'$ never vanishes on $K_1$.
Being $K_1$ compact, this is enough to achieve \ref{firstpaper:ind:three} when $n=2$.

\subsection{Passing from $I_n$ to $I_{n+1}$}
Let us assume that $I_n$ is true and prove $I_{n+1}$. First of all we have that \ref{firstpaper:ind:one}$_{+1}$, 
\ref{firstpaper:ind:three}$_{+1}$ and the first part of \ref{firstpaper:ind:four}$_{+1}$ are just a matter of choosing respectively $\delta_{n+1},\;\eta_{n+1}$ and $\epsilon_n$ sufficiently small. 
By \ref{firstpaper:ind:two} with $k=n$, and with $\epsilon_n$ above fixed, we may apply Proposition \ref{firstpaper:indstep} to get a holomorphic embedding
$f_{n+1}\colon C_{n+1}\hookrightarrow\Bbb C^2$ to obtain the second part of \ref{firstpaper:ind:four}$_{+1}$ and \ref{firstpaper:ind:two}$_{+1}$ with $m=n+1$ and $k=n$.  Next, by choosing $\delta_{n+1}'$ sufficiently small we get \ref{firstpaper:ind:two}$_{+1}$ for $k=m=n+1$. It remains to explain how to achieve \ref{firstpaper:ind:two}$_{+1}$ for 
$m=n+1$ and $k=1,\dots,n-1$.  Since 
$$
\overline{C_{n+1}\setminus K_k}= \overline{C_{n+1}\setminus K_n}\cup\overline{K_n\setminus K_k}
$$
what is needed is $f_{n+1}(\overline{K_n\setminus K_k})\subset\mathbb C^2\setminus r_k\overline{\mathbb B^2}$. This follows from \ref{firstpaper:ind:two}, possibly 
after having decreased $\epsilon_n$.

\subsection{Proof of Theorem \ref{firstpaper:main}} 

Consider the objects constructed in the inductive scheme above. Then by \ref{firstpaper:ind:four} we have that 
$\lim_{j\rightarrow\infty} f_j = f$ exists on $K_k$ for any $k$. We have that 
$$
\left(\bigcup_kC_k\right)^\circ=\Bbb P^1\setminus\bigcap_k Q_k=: \mathbb P^1\setminus C
$$
and so $\lim_{j\rightarrow\infty} f_j=f$ exists on $\mathbb P^1\setminus C$. Now for any fixed $k$ we get by \ref{firstpaper:ind:two} that 
$f_n^{-1}(r_k\overline{\mathbb B^2})\subset K_k$ for all $n>k$ and therefore $f^{-1}(r_k\overline{\mathbb B^2})\subset K_k$, so $f$ is proper. 
By \ref{firstpaper:ind:four} we get that $\|f-f_k\|_{K_k}<\eta_k$, hence by \ref{firstpaper:ind:three} we have that $f\colon K_{k-1}\hookrightarrow\mathbb C^n$ is an embedding 
for all $k$, so $f$ is an embedding.  
Finally, note that when constructing a rectangle $Q_{n+1}$ from $Q_n$, a crude estimate 
gives that one obtains $Q_{n+1}$ by removing strips of total area bounded by $2^n\cdot\delta_n$.  It follows that 
$$
\mu(C) = \mu\left(\bigcap_n Q_n\right)\geq 4-\sum_{n=1}^\infty 2^n\cdot\delta_n\geq 4 - \epsilon\cdot\sum_{n=1}^\infty 2^{-n} > 4-\epsilon. 
$$

$\hfill\blacksquare$

\bibliographystyle{amsplain}

\end{document}